\documentclass[11pt,a4paper]{article}
\usepackage{amsmath, amsfonts, amsthm, amssymb, color}
\usepackage{enumerate, hyperref}
\usepackage{graphicx}
\usepackage{tikz}

\usetikzlibrary{intersections,calc,arrows.meta}

\newtheorem{thm}{Theorem}
\newtheorem{lem}{Lemma}
\newtheorem{cor}{Corollary}

\bfseries\normalfont

\begin{document}

\title{Note on the thickness of the Cartesian product of a complete graph and a path}

\author{
Kenta Noguchi\thanks{Department of Information Sciences, 
Tokyo University of Science, Noda, Japan.
Email: {\tt noguchi@rs.tus.ac.jp}}
}

\date{}
\maketitle

\noindent
\begin{abstract}
We determine the thickness of the Cartesian product $K_{6p+4} \square P_2$ for $p \ge 0$ and of the Cartesian product $K_8 \square P_m$ for $m \ge 1$, where $K_n$ and $P_m$ denote the complete graph on $n$ vertices and the path on $m$ vertices, respectively.
\end{abstract}

\noindent
\textbf{Keywords.}
thickness, Cartesian product of graphs, planar graph, edge decomposition

\section{Introduction}
To find a good decomposition of graphs is widely studied in the literature. In this paper, we deal with one of them, called the thickness. For a graph $G$, the \textit{thickness} $\theta(G)$ is defined as the minimum integer $k$ such that there exists an edge decomposition $E(G)=A_1 \cup \cdots \cup A_k$ so that $A_i$ induces a planar graph for $1 \le i \le k$. The thickness of some well-known graph families has been investigated, for example, for the complete graphs~\cite{AG, BH} and for the complete bipartite graphs~\cite{BHM}. Especially, the thickness of the complete graph $K_n$ is completely determined as follows.

\begin{thm}[\cite{AG}, \cite{BH}]
\label{Kn}
The thickness of $K_n$ is $\theta(K_n) = \left\lfloor \frac{n+7}{6} \right\rfloor$, except that $\theta(K_9)=\theta(K_{10})=3$.
\end{thm}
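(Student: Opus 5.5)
Theorem~\ref{Kn} is the classical result of Alekseev--Gon\v{c}akov and Beineke--Harary, so the natural plan is to split it into a lower bound from Euler's formula and an upper bound from explicit constructions.

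\emph{Lower bound.} A simple planar graph on $n \ge 3$ vertices has at most $3n-6$ edges, so
\[
\theta(K_n) \ge \left\lceil \frac{n(n-1)/2}{3n-6} \right\rceil .
\]
A short computation shows that this ceiling equals $\lfloor (n+7)/6 \rfloor$ for every $n$. One checks this by writing $n=6k+r$ with $0\le r\le 5$ and comparing $n(n-1)$ with $2k(3n-6)$ and $2(k+1)(3n-6)$.

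For $n=9,10$ the counting bound gives only $2$. To get $3$ I would use the known fact that $K_9$ is not biplanar, i.e.\ its complement-pair decomposition into two planar graphs does not exist (Battle--Harary--Kodama, Tutte). Since $K_9 \subseteq K_{10}$, it is enough to show $\theta(K_9) \ge 3$. The upper bounds $\theta(K_9)\le\theta(K_{10})\le 3$ come from an explicit three-part decomposition of $K_{10}$.

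\emph{Upper bound, general $n$.} By monotonicity under taking subgraphs, it suffices to treat $n \equiv 4 \pmod 6$, namely $n=6p+4$ with target $p+1$. For other residues one uses the largest such $n$ below or above as appropriate, and $n$ of the form $6p+3$ needs separate care when the bound jumps. The construction I would aim for decomposes $K_{6p+4}$ into $p+1$ planar graphs, each close to a triangulation. The standard approach uses a cyclic construction: place vertices in $\mathbb{Z}_{6p}$ plus a few special vertices, and build each planar layer from a ``zigzag'' Hamiltonian-path triangulation of the cyclic vertices. Rotating the base layer by multiples of a fixed shift then covers every difference class exactly once. The special vertices are inserted into faces so that each one ends up adjacent to all others across the layers.

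The main obstacle is this general upper-bound construction, which historically took decades and was completed by Alekseev--Gon\v{c}akov. The hardest cases are the residues where the counting bound is nearly tight, since there each layer must be almost a triangulation and there is almost no slack. I would first try to verify the difference-covering property of the rotated zigzag layers. Then I would handle the extra vertices by case analysis on $p$ modulo small numbers, with small values of $p$ done by hand.
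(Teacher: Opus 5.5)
The paper does not prove this theorem; it quotes it from \cite{AG} and \cite{BH} and uses it as a black box. Measured as a proof, your proposal has a genuine gap, and it sits exactly where the theorem's difficulty lies.

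Your lower bound is complete and correct. We have $\frac{n(n-1)}{6(n-2)} = \frac{n+1}{6} + \frac{1}{3(n-2)}$, and $0 < \frac{1}{3(n-2)} \le \frac16$ for $n \ge 4$. Since $\frac{n+1}{6}$ is a multiple of $\frac16$, the ceiling is $\lfloor \frac{n+1}{6} \rfloor + 1 = \lfloor \frac{n+7}{6} \rfloor$, and $n=3$ checks directly. Invoking Battle--Harary--Kodama and Tutte for $\theta(K_9)\ge 3$ is also the classical argument.

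The upper bound $\theta(K_{6p+4}) \le p+1$, however, is not proved. You describe a design you ``would aim for'' but do not:
\begin{enumerate}[(i)]
\item specify the base layer on $\mathbb{Z}_{6p}$;
\item check that it is planar;
\item verify that its rotations hit every difference class exactly once;
\item say how the $24p+6$ edges incident to the four extra vertices are split among the $p+1$ layers.
\end{enumerate}
None of these steps is routine. Since $(p+1)\bigl(3(6p+4)-6\bigr) - \binom{6p+4}{2} = 3p$, the layers must be triangulations up to a total deficit of only $3p$ edges. These steps are precisely the content of \cite{AG}, which is why the paper's concluding remarks call this case ``much involved'' and cite it rather than reprove it. If you intend to rely on the literature, cite it for the constructions and drop the speculative scheme. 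Otherwise, the construction itself is the proof you still owe.

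Your reduction step also needs repair. Subgraph monotonicity only passes upper bounds from a graph to its subgraphs. So for a given $n$ you must use the smallest $n' \equiv 4 \pmod 6$ with $n' \ge n$, never one below. Nothing special happens at $n=6p+3$, since $\lfloor \frac{(6p+3)+7}{6} \rfloor = \lfloor \frac{(6p+4)+7}{6} \rfloor = p+1$; the jump is between $6p+4$ and $6p+5$.

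More importantly, the reduction fails for $p=1$. Because $\theta(K_{10}) = 3$, it yields nothing for $5 \le n \le 8$, so you need a separate biplanar decomposition of $K_8$. One is exhibited in this paper: the graphs of Figure~\ref{fig:1} with $u_i^m$ identified with $v_i^m$. Your separate three-layer decomposition of $K_{10}$, by contrast, is unnecessary once the general case is available, since $K_{10} \subseteq K_{16}$ and $\theta(K_{16}) \le 3$.
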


For graphs $G$ and $H$, the Cartesian product of them, denoted by $G\square H$, is defined as:
\begin{align*}
V(G\square H) &= V(G) \times V(H),\\
E(G\square H) &= \left\{ \{(u, v), (u, v')\} \mid u\in V(G), vv'\in E(H) \right\} \cup \left\{ \{(u, v), (u', v)\} \mid uu'\in E(G), v\in V(H) \right\}.
\end{align*}

Chen and Yin \cite{CY}, Yang and Chen \cite{YC}, and Guo and Yang \cite{GY} investigated the thickness of the Cartesian product of graphs. Especially, Yang and Chen \cite{YC} considered the thickness of the Cartesian product of the complete graph $K_n$ and the path $P_m$, and claimed that the following statements are true.

\medskip
\noindent
\textbf{(false) Theorem~A} (Theorem~16 in \cite{YC})
\textit{The thickness of the Cartesian product $K_n \square P_2~(n \ge 2)$ is
\[
\theta(K_n \square P_2) = \left\lfloor \frac{n+8}{6} \right\rfloor,
\]
except that $\theta(K_8 \square P_2) = \theta(K_9 \square P_2) = 3$.}

\medskip
\noindent
\textbf{(false) Theorem~B} (Theorem~17 in \cite{YC})
\textit{The thickness of the Cartesian product $K_n \square P_m~(n \ge 2, m \ge 3)$ is
\[
\theta(K_n \square P_m) = \left\lfloor \frac{n+9}{6} \right\rfloor,
\]
except that $\theta(K_3 \square P_m) = 1, \theta(K_8 \square P_m) = 3$ and possibly when $n=6p+3~(p \ge 2)$.}

\medskip
However, unfortunately there is an error in the both proofs. In fact, the statements are false as Chen, Kohonen, and Yang~\cite{CKY} showed $\theta(K_8 \square P_2) = \theta(K_8 \square P_3) = 2$.
Instead, they showed the following theorems.

\begin{thm}[Theorem~2 in \cite{CKY}] 
\label{prevthm2}
The thickness of the Cartesian product $K_n \square P_2~(n \ge 2)$ is
\[
\theta(K_n \square P_2) = \left\lfloor \frac{n+8}{6} \right\rfloor,
\]
except that $\theta(K_9 \square P_2) = 3$ and possibly when $n=6p+4~(p \ge 2)$.
\end{thm}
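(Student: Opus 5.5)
This is a result cited from \cite{CKY}, so the plan below is how I would reprove it rather than reconstruct their exact argument. It splits into a lower bound and an upper bound. For the lower bound, note that $K_n \square P_2$ has $2n$ vertices and $2\binom{n}{2}+n = n^2$ edges. A simple planar graph on $N\ge 3$ vertices has at most $3N-6$ edges, so
\[
\theta(K_n\square P_2)\ \ge\ \left\lceil \frac{n^2}{6n-6}\right\rceil .
\]
Writing $n=6p+r$ with $0\le r\le 5$, a routine computation shows this equals $\lfloor (n+8)/6\rfloor$ for every residue except $r=4$. For $r=4$ the bound gives only $p+1$ rather than $p+2$, which is exactly why $n=6p+4$ is excluded. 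For $n=9$ I would use $\theta(K_9\square P_2)\ge\theta(K_9)=3$ from Theorem~\ref{Kn}. The matching upper bound $3$ comes from decomposing each copy of $K_9$ into three planar graphs and routing the nine rungs into those layers.

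For the upper bound the approach is constructive. I would start from known planar decompositions of $K_n$ into $\theta(K_n)=\lfloor (n+7)/6\rfloor$ layers, as in \cite{AG,BH}. I would place the second copy of $K_n$ as a mirror image, using an inversion of the plane, so that each planar layer $G_i$ of the first copy together with its mirror $G_i'$ is drawn with the two copies in complementary faces or nested. Then I would distribute the $n$ rung edges $(v,1)(v,2)$ among the layers: in layer $i$, vertex $v$ can receive its rung if $v$ lies on a common face boundary arranged to face its mirror copy. When the decomposition of $K_n$ has some slack, i.e.\ $n\not\equiv 4\pmod 6$ so that $\lfloor (n+8)/6\rfloor=\theta(K_n)$ for most residues, I would try to cover every vertex by at least one layer in which it lies on the outer face. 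For the residues where $\lfloor (n+8)/6\rfloor=\theta(K_n)+1$ (namely $r=5$), an extra layer is available. In that case I would simply put all rungs in a new layer consisting of a perfect matching, together with some edges shifted from the other layers if needed.

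The main obstacle is the tight residues, where no extra layer is available and the rungs must be absorbed by the existing $\theta(K_n)$ layers of the $K_n$ decomposition. Here I would work with the explicit cyclic-type decompositions of $K_{6p+r}$. I would modify them so that the outer face of each layer contains a prescribed set of vertices, with these sets covering $V(K_n)$. Then each layer can be drawn together with its reflected copy, joined through the outer faces by planar non-crossing rungs along the boundary cycle. Small cases, such as $n\le 8$ and the exceptional $n=9$, I would handle by explicit drawings.
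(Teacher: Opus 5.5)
There is a genuine gap in the upper bound, and it starts with a residue slip. $\lfloor (n+8)/6\rfloor$ exceeds $\theta(K_n)=\lfloor (n+7)/6\rfloor$ only for $n\equiv 4 \pmod 6$, not for $n\equiv 5$. For $n=6p+5$ both equal $p+2$, so there is no spare layer and your perfect-matching layer is unavailable. In fact every $n$ in the theorem's scope other than $n=4$ is tight: the target equals $\theta(K_n)$, including $n=9,10$. So your whole upper bound rests on the step ``modify the cyclic decompositions so that the outer faces cover $V(K_n)$''. You give no mechanism for this, and it is the entire difficulty.

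Your mirror-image gadget is the right one. What is missing is the observation that this covering condition is exactly what a planar decomposition of $K_{n+1}$ supplies. Take such a decomposition $\{G_1,\dots,G_k\}$ with $k=\theta(K_{n+1})$, and let $w$ be the extra vertex. Draw each $G_i$ so that a small circle $C$ around $w$ meets each edge $wv$ exactly once and encloses nothing else. Delete everything inside $C$ and add the image of the outside under inversion in $C$. The two copies of $G_i-w$ meet only at the points where the edges $wv$ cross $C$, so each edge $wv$ together with its image becomes the rung $(v,1)(v,2)$, with no crossings. Since each edge $wv$ lies in exactly one layer, this is a planar decomposition of $K_n\square P_2$, i.e.\ $\theta(K_n\square P_2)\le\theta(K_{n+1})$. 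This is the inequality~(2) of \cite{YC} that the paper invokes in the proof of Theorem~\ref{mainthm1}; the paper itself only cites the statement. By Theorem~\ref{Kn}, $\theta(K_{n+1})=\lfloor (n+8)/6\rfloor$ for all $n\notin\{8,9\}$, and $\theta(K_{10})=3$. So no surgery on the known decompositions of $K_n$ is needed.

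Two cases then remain, and your proposal treats both too lightly.

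First, $n=8$ is exactly where this reduction fails, since $\theta(K_9)=3$. It is not a routine small case: it is precisely where the earlier (false) Theorem~A went wrong. It needs an explicit biplanar decomposition of $K_8\square P_2$, as in \cite[Figures~1 and~2]{CKY} or the step $m=2$ of the construction in the proof of Theorem~\ref{mainthm2}.

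Second, the statement excludes $n=6p+4$ only for $p\ge 2$, so $n=4$ and $n=10$ must be proved, and your Euler bound gives only $1$ and $2$ there. For $n=10$, use $\theta(K_{10})=3$. For $n=4$, you need that $K_4\square P_2$ is nonplanar: contracting one copy of $K_4$ to a single vertex produces $K_5$.

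Your Euler computation for the remaining residues is correct. The simpler bound $\theta(K_n\square P_2)\ge\theta(K_n)$ gives the same values there.
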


\begin{thm}[Theorem~3 in \cite{CKY}] 
\label{prevthm3}
The thickness of the Cartesian product $K_n \square P_m~(n \ge 2, m \ge 3)$ is
\[
\theta(K_n \square P_m) = \left\lfloor \frac{n+9}{6} \right\rfloor,
\]
except that $\theta(K_3 \square P_m) = 1$ and possibly when $n=6p+3, 6p+4$ and $n=8~(p \ge 2)$. Moreover, $\theta(K_8 \square P_3) = 2$.
\end{thm}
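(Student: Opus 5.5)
The statement splits into a lower bound from Euler's formula and an upper bound by explicit planar decompositions; for $K_3 \square P_m$ one checks directly that it is planar, since it is a stack of triangles joined along a path, i.e.\ a triangular prism extended in $m$ layers. I would treat the two bounds separately.

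\textbf{Lower bound.} I would count edges. The graph $K_n \square P_m$ has $mn$ vertices and $m\binom{n}{2}+(m-1)n$ edges. A planar graph on $N\ge 3$ vertices has at most $3N-6$ edges, so
\[
\theta(K_n\square P_m)\ \ge\ \left\lceil \frac{m\binom{n}{2}+(m-1)n}{3mn-6}\right\rceil .
\]
Write $n=6p+r$. For each residue $r$ and each $m\ge 3$, I would check that this quantity is at least $\lfloor (n+9)/6\rfloor$. The quotient is increasing in $m$, so the worst case is $m=3$. For example, for $n=6p+1$ the bound becomes $\lceil (54p^2+21p+2)/(54p+3)\rceil = p+1$. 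The other residues are similar one-line computations, possibly requiring the refined bound for small $n$. Where the bare count falls short, I would pass to a subgraph: $K_n\square P_m$ contains $K_n\square P_3$, so monotonicity of thickness under subgraphs lets me restrict to $m=3$ throughout.

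\textbf{Upper bound.} The trivial construction takes an optimal planar decomposition of $K_n$ into $\theta(K_n)$ parts, as in Theorem~\ref{Kn}, copies it in every layer, and adds one extra planar part consisting of all the $n$ vertical paths. This gives $\theta(K_n\square P_m)\le \theta(K_n)+1=\lfloor (n+13)/6\rfloor$. This already equals $\lfloor (n+9)/6\rfloor$ when $r\in\{3,4\}$, and it fails by one for $r\in\{0,1,2,5\}$. So for those residues the real work is to absorb the vertical edges into the existing parts. My plan is as follows. In layer $i$, embed each planar part $G_j$ of $K_n$ for $j\le t=\theta(K_n)$, alternating $G_j$ and its mirror image between consecutive layers. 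Then nest layer $i+1$ inside a face of layer $i$, so that the vertices $(v,i)$ and $(v,i+1)$ can be joined across an annulus without crossings. For this to work, the vertices incident to vertical edges assigned to part $j$ must appear on a common face in the same cyclic order in consecutive layers. I therefore need a decomposition of $K_n$ into $t$ planar parts whose vertex set is partitioned into sets $S_1,\dots,S_t$ such that $S_j$ lies on a single face of part $j$. When $r\in\{0,1,2,5\}$ the part count $t$ has enough slack below the Euler bound, namely $3n-6$ per part versus $\binom n2/t$ edges, that one can delete edges of $K_n$ to create such large faces and reinsert those edges elsewhere.

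\textbf{Main obstacle.} The main obstacle is constructing this ``face-compatible'' planar decomposition of $K_n$ for general $p$ in each residue class. I would try to modify the classical Beineke--Harary / Alekseev--Gon\v{c}akov constructions, which are built from cyclic rotations of a base planar graph on $6p+r$ vertices. I would choose the base graph so that one face contains a block of roughly $n/t$ consecutive vertices, and let the rotations distribute these blocks to cover all of $V(K_n)$. Small cases such as $n=8$, where Theorem~\ref{Kn} itself has exceptions, would be handled by explicit drawings.
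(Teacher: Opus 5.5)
The paper does not reprove this statement; it is quoted from \cite{CKY}. The upper bound the paper relies on, in its proof of Corollary~\ref{cor2}, is inequality~(3) of \cite{YC}: $\theta(K_n\square P_m)\le\theta(K_{n+2})$. Your upper-bound plan for $r\in\{0,1,2,5\}$ misses this, and as stated it fails for $m\ge 3$.

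With one partition $S_1,\dots,S_t$ and one face per part, the middle layer $i$ of part $j$ must be joined along $S_j$ to both layer $i-1$ and layer $i+1$. Once layer $i$ is nested in the special face of layer $i-1$, its own special face is occupied by the annulus. The vertical edges cut that annulus into regions, each meeting only two consecutive attachment points of layer $i$. Your requirement supplies no other face of layer $i$ containing $S_j$, so in general layer $i+1$ has nowhere to go.

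Each part therefore needs a second face (or a second, disjoint stretch of face boundary) carrying the edges to the next layer, which in effect means two partitions of $V(K_n)$. Inserting apex vertices $x,y$ into these two faces shows that the structure you need amounts to a planar decomposition of $K_{n+2}-xy$ into $t$ parts. For $n=6p+2$ this is the case $\theta(K_{6p+4})=p+1$, whose proof in \cite{AG} is already much involved, and only $3p+1$ edges of slack remain. So an appeal to edge-count slack cannot replace a construction.

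Read backwards, this is the missing idea: an optimal decomposition of $K_{n+2}$ gives the structure for free. Deleting $x$ and $y$ from part $j$ exposes $N_j(x)$ and $N_j(y)$ in rotation order on face boundaries. As $j$ varies, these neighbourhoods partition $V(K_n)$. Chaining alternately mirrored copies through the $x$-face and the $y$-face in turn gives $\theta(K_n\square P_m)\le\theta(K_{n+2})=\lfloor (n+9)/6\rfloor$ by Theorem~\ref{Kn}, for every $n\notin\{7,8\}$. This needs no residue split and no new construction.

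The details also contain errors.
\begin{itemize}
\item Your claim that $\theta(K_n)+1$ already suffices for $r\in\{3,4\}$ is false for $n=9,10$: it gives $4$, while the formula gives $3$. The exceptions in Theorem~\ref{Kn} are $n=9,10$, not $n=8$. The reduction above gives $\theta(K_{11})=\theta(K_{12})=3$ there.
\item The genuinely special upper-bound cases are $n=7$ and $K_8\square P_3$. There the reduction lands on $K_9$ or $K_{10}$, and separate constructions are needed.
\item In the lower bound, the Euler quotient is \emph{decreasing} in $m$ (its derivative has the sign of $-3n$). So $m=3$ is the strongest case, not the weakest; your subgraph remark saves the conclusion.
\item More seriously, at $m=3$ the quotient equals $\frac{n+1}{6}+\frac{1}{9n-6}$. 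For $r\in\{3,4\}$ it therefore yields only $p+1$, never $p+2$, for every $p$, so these are not ``similar one-line computations''. This failure is exactly why $n=6p+3,6p+4$ $(p\ge 2)$ are excluded, and the surviving cases need other arguments. For $n=4$, contracting one copy of $K_4$ in $K_4\square P_2$ gives a $K_5$ minor. For $n=9,10$, use $K_n\subseteq K_n\square P_m$ together with $\theta(K_9)=\theta(K_{10})=3$.
\end{itemize}
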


In the present paper, we show the following theorems.

\begin{thm} 
\label{mainthm1}
Let $p \ge 0$ be an integer. The thickness of the Cartesian product $K_{6p+4} \square P_2$ is
\[
\theta(K_{6p+4} \square P_2) = p+2.
\]
\end{thm}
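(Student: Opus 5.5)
The proof splits into the lower bound $\theta(K_{6p+4}\square P_2)\ge p+2$ and a matching construction with $p+2$ planar layers. For $p\in\{0,1\}$ the value $p+2=\lfloor (6p+12)/6\rfloor$ is already covered by Theorem~\ref{prevthm2}, so the real work is $p\ge 2$. I expect the lower bound to be the new ingredient. The reason is that neither Euler's formula for the whole graph nor Theorem~\ref{Kn} for the subgraph $K_{6p+4}$ gives more than $p+1$.

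\textbf{Lower bound.} Write $k=6p+4$, and let $X,Y$ be the two copies of $K_k$, joined by the perfect matching $M$. Suppose, for a contradiction, that $E=A_1\cup\cdots\cup A_{p+1}$ with each $A_i$ planar. Let $a_i=|A_i\cap E(X)|$, $b_i=|A_i\cap E(Y)|$ and $c_i=|A_i\cap M|$. Define the deficits $d_i=3k-6-a_i\ge 0$ and $d_i'=3k-6-b_i\ge 0$. Since $|E(K_k)|=18p^2+21p+6$ and $(p+1)(3k-6)=18p^2+24p+6$, we get $\sum_i d_i=\sum_i d_i'=3p$. Also $\sum_i c_i=6p+4$.

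The key local lemma I would prove is this. Let $G$ be a plane graph on $X\cup Y$ with $|X|=|Y|=k$, such that $G[X]$ has $3k-6-d$ edges and $G[Y]$ is connected. Then at most $d+3$ vertices of $X$ have a neighbour in $Y$. The argument: $G[Y]$ lies in a single face $f$ of $G[X]$. Euler's formula for $G[X]$ gives $\sum_{\text{faces}}(\ell(f)-3)\le d$ (with boundary walks counted with multiplicity, and $G[X]$ possibly disconnected). Hence $\ell(f)\le d+3$, and only boundary vertices of $f$ can be matched into $Y$. By symmetry we also get $c_i\le d_i'+3$. Therefore
\[
c_i\le \min(d_i,d_i')+3\le \tfrac12(d_i+d_i')+3 ,
\]
and summing over $i$ gives $6p+4\le 3p+3(p+1)=6p+3$, which is the desired contradiction.

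\textbf{Main obstacle.} The step I expect to be hardest is removing the connectivity assumption on $G[Y]$ (and on $G[X]$). If $G[Y]$ has $r$ components, they may sit in $r$ different faces, and the bound becomes $c_i\le d_i+3r$. In exchange, $G[Y]$ then has at most $3k-3-3r$ edges, so $d_i'\ge 3(r-1)$. Combined naively, these inequalities lose a constant, so I would refine the counting. Two options:
\begin{itemize}
\item Charge each component of $Y$ only to the boundary vertices of its own face, which are pairwise disjoint apart from shared cut-vertices, and use that a small component (one with fewer than three vertices) contributes fewer than $3$ matching edges.
\item Work directly with the total deficit $D_i=3(2k)-6-|A_i|$ of each layer, which satisfies $\sum_i D_i=6p+2$, and show $D_i\ge d_i+d_i'+\bigl(c_i-\min(d_i,d_i')-3\bigr)^+$-type inequalities.
\end{itemize}
Small cases, such as components that are single vertices or edges, will need to be checked by hand.

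\textbf{Upper bound.} I would start from a known decomposition of $K_{6p+4}$ into $p+1$ planar graphs, available for $p\ne 1$ by Theorem~\ref{Kn}. I would take it on both copies $X$ and $Y$ in mirror-image embeddings, so that layer $i$ is the disjoint union of two planar graphs. I would then add a $(p+2)$-nd layer containing the whole matching $M$. This layer is a disjoint union of edges, hence planar, and the leftover structure can even absorb a few copy edges if needed. This gives $\theta\le p+2$ for all $p\ne 1$. The case $p=1$ ($K_{10}$) is covered by Theorem~\ref{prevthm2}, or by taking the $3$-layer decomposition of $K_{10}$ and distributing $M$ over faces. Together with the lower bound, this yields $\theta(K_{6p+4}\square P_2)=p+2$.
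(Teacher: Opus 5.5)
Your upper bound is correct. It is just $\theta(G\square P_2)\le\theta(G)+1$, with $p=1$ taken from Theorem~\ref{prevthm2}; the paper instead uses $\theta(K_n\square P_2)\le\theta(K_{n+1})$, which covers every $p$ at once. Your bookkeeping $\sum_i d_i=\sum_i d_i'=3p$, $\sum_i c_i=6p+4$ is also right.

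The gap is in the lower bound, which is the whole content of the theorem. You prove $c_i\le d_i+3$ only when the $Y$-part of layer $i$ is connected, and nothing lets you assume this. Without connectivity the inequality is false. Take $G[X]$ a triangulation, so $d_i=0$. Let each $y\in Y$ be isolated in $G[Y]$ and joined only to its partner by a pendant matching edge drawn into a face at that partner. Then $c_i=k$.

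Your repair ($c_i\le d_i+3r$, $d_i'\ge 3(r-1)$) gives only $c_i\le d_i+d_i'+3$, which sums to $6p+4\le 9p+3$, so there is no contradiction. Neither of your two options is carried out, and the second is wrong as stated. Since $D_i=6+d_i+d_i'-c_i$ identically, any inequality $D_i\ge d_i+d_i'+(\cdots)^+$ would force $c_i\le 6$, which the same example violates.

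\textbf{The missing idea.} You only need the averaged inequality $c_i\le 3+\tfrac12(d_i+d_i')$, i.e.\ $D_i\ge c_i$. This follows from Euler's formula applied to the whole layer $G_i$ rather than to $G[X]$, using that no matching edge lies in a triangle of $K_k\square P_2$. Three observations do the work:
\begin{enumerate}
\item Each closed boundary walk uses an even number of matching sides.
\item A face of length $3$ is a triangle, so it contains no matching side.
\item A face all of whose sides are matching sides has boundary a union of $K_2$'s and isolated vertices. Such a set does not separate the plane, so this happens only if $G_i$ is itself a matching.
\end{enumerate}
Set aside the trivial layers (at most one edge, or matching edges only), where the inequality is immediate. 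Then every face $f$ with $M_f>0$ matching sides has $\ell(f)\ge\max(4,M_f+1)$, hence $\ell(f)-3\ge M_f/2$. Every other face has $\ell(f)\ge 3$. The matching edges of $G_i$ supply $2c_i$ sides, so $\sum_f(\ell(f)-3)\ge c_i$, and Euler gives $|A_i|\le 6k-6-c_i$. Summing over the $p+1$ layers yields $6p+4\le 6p+3$, with no connectivity hypothesis at all.

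This is the paper's argument in per-layer form. The paper phrases it globally: it uses Lemma~\ref{lem1} to ensure every layer has at least two edges. It then bounds the total number of faces over all layers by $(2n^2-n)/3$, because faces meeting $P_2$-edges have length at least $4$.
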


\begin{thm} 
\label{mainthm2}
Let $m \ge 1$ be an integer. The thickness of the Cartesian product $K_8 \square P_m$ is
\[
\theta(K_8 \square P_m) = 2.
\]
\end{thm}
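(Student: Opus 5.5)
We need to prove $\theta(K_8 \square P_m)=2$ for every $m \ge 1$.

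\textbf{Lower bound.} This is immediate. $K_8 \square P_m$ contains $K_8$, which is nonplanar, so $\theta(K_8 \square P_m)\ge \theta(K_8)=2$ by Theorem~\ref{Kn}.

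\textbf{Upper bound: strategy.} It suffices to decompose the edges of $K_8 \square P_m$ into two planar graphs $G_1, G_2$ for every $m$. For $m\le 3$ this is already given by Chen, Kohonen and Yang, since a subgraph of a planar graph is planar. We need a construction that extends to arbitrary $m$. The natural idea is a periodic construction. Write the copies of $K_8$ as $K^{(1)},\dots,K^{(m)}$. We would fix once and for all a decomposition $K_8 = H \cup H'$ into two planar graphs and alternate it: in odd copies $H$ goes to $G_1$ and $H'$ to $G_2$, and in even copies this is reversed (or some suitable relabeling/automorphism is applied). The matching edges between $K^{(i)}$ and $K^{(i+1)}$ are distributed between $G_1$ and $G_2$. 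Each $G_j$ is then a chain of planar pieces. It is planar provided that, for every consecutive pair, the vertices joined by the matching edges assigned to $G_j$ can be placed on a common face in both pieces, in compatible cyclic orders. Then we can glue the pieces successively, nesting each copy inside a face of the previous one.

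\textbf{Key steps.}
\begin{enumerate}
\item Choose a planar decomposition $K_8=H\cup H'$, for example a maximal planar $H$ with $18$ edges and $H'$ with the remaining $10$ edges. Choose a partition of the $8$ vertices into sets $S$ and $T$.
\item Arrange that in $H$ the vertices of $S$ lie on one face, and also the vertices of $T$ lie on one face (possibly different faces). Arrange the same for $H'$, with compatible cyclic orders.
\item Route the matching edges at vertices of $S$ between copies $i$ and $i+1$ in $G_1$ when $i$ is odd and in $G_2$ when $i$ is even. Do the opposite for vertices of $T$.
\end{enumerate}
With this alternation, each copy in $G_j$ must attach to its predecessor through one face and to its successor through another face. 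The pieces then stack like a tower of nested annular regions, and each $G_j$ is planar for all $m$.

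An alternative is to exploit the slack of $H'$: it has only $10$ edges, so in one of the two graphs entire vertex sets lie on the outer face. One could also use the explicit drawings of Chen, Kohonen and Yang for $K_8\square P_3$ and show that they are "periodic", i.e. that the drawing of the last copy has the same boundary configuration as the first, allowing concatenation.

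\textbf{Main obstacle.} The hard step is finding the base decomposition of $K_8$ with the required face conditions. Each of $H$ and $H'$ must have two faces, or one face, carrying all vertices needed to connect to both neighbouring copies, with matching cyclic orders. This has to hold while $H$ is nearly triangulated, and a triangulation has only triangular faces. So we must lower the edge count of $H$, for instance to $16$ or $17$ edges, keeping $H'$ planar with the extra edges; Euler's formula allows up to $36$ edges in total across the two graphs, giving the slack. We would search by hand for such a decomposition, guided by the CKY drawing for $P_3$. If one balanced choice fails, we would allow the $S/T$ partition to differ between odd and even copies by an automorphism of $K_8$.
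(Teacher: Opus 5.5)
\textbf{There is a genuine gap: the base decomposition is never produced.} Your lower bound is fine. Your architecture is also the paper's: fix a biplanar decomposition of $K_8$, swap its two pieces between $G_1$ and $G_2$ from layer to layer (up to a relabelling), alternate the matching edges at $S$ and at $T$, and nest each layer in a face of the next. But the upper bound rests entirely on exhibiting a base decomposition with the right face structure, and you stop exactly there. So $\theta(K_8\square P_m)\le 2$ is not proved for any $m\ge 4$.

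\textbf{Your step~2 cannot be satisfied as stated.} Take it literally, with the same labels on all layers. The $S$-vertices of an $H$-layer are matched to the same-named vertices of an adjacent $H'$-layer. So $S=\{a_1,\dots,a_4\}$ must occur in one cyclic order $(a_1a_2a_3a_4)$, up to reversal, on a face of both $H$ and $H'$. Hence neither piece contains vertex-disjoint paths $a_1$--$a_3$ and $a_2$--$a_4$. Since each of the edges $a_1a_3$, $a_2a_4$ lies in exactly one piece, they lie in different pieces, say $a_1a_3\in H$. Likewise, after rotating the labels of $T$, $b_1b_3\in H$ and $b_2b_4\in H'$. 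Because $b_1b_3\in H$, the component of $H-\{b_1,b_3\}$ containing $a_1a_3$ cannot contain both $b_2$ and $b_4$. So for some $b\in\{b_2,b_4\}$, both $a_1b$ and $a_3b$ lie in $H'$. Then $a_1ba_3$ and $a_2a_4$ are forbidden disjoint paths in $H'$. If instead $|S|\ne 4$, one of $S,T$ has at least five vertices. The five diagonals of a pentagon cannot be split between two pieces without two crossing ones landing in the same piece, so that fails too. Hence the relabelling you mention only as a fallback is forced, and you carry out neither it nor any other fix.

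\textbf{The missing idea is to let one piece have isolated vertices.} Then only three matching edges per step must be routed between faces.
\begin{enumerate}
\item The paper takes $H_1=K_2+3K_2$, with apices $u_3,u_8$ and matching $u_1u_5,u_4u_6,u_2u_7$. This is three $K_4$'s sharing the edge $u_3u_8$, with $16$ edges.
\item $H_2$ is the complementary octahedron on $\{1,2,4,5,6,7\}$, with $v_3,v_8$ isolated.
\item $S=\{1,2,3,4\}$ and $T=\{5,6,7,8\}$. Even layers use the same two graphs relabelled by $(3\,4)(7\,8)$, with their roles in $G_1$ and $G_2$ swapped.
\item At each step, three matching edges join a triangle of the octahedron piece to a quadrilateral face of the $K_2+3K_2$ piece, e.g.\ $u_1u_2u_3$ to $u_1u_3u_2u_8$. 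Three points have only one cyclic order up to reflection, so compatibility is automatic.
\item The fourth matching edge ends at a vertex isolated in the octahedron piece. That vertex is pendant in $G_j$, so it is simply dropped into a face next to its partner.
\end{enumerate}
In every embedding of $K_2+3K_2$, the vertices $u_1,u_2,u_4$ lie in three different $K_4$-blocks and share no face. So your step~2 would reject exactly the decomposition that works.
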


Combined Theorems~\ref{mainthm1} and \ref{prevthm2}, the thickness of $K_n \square P_2$ is completely determined as the following corollary, which corresponds to the true version of Theorem~A.

\begin{cor}
\label{cor1}
Let $n \ge 1$ be an integer. The thickness of the Cartesian product $K_n \square P_2$ is
\[
\theta(K_n \square P_2) = \left\lfloor \frac{n+8}{6} \right\rfloor,
\]
except that $\theta(K_9 \square P_2) = 3$.
\end{cor}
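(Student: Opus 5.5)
The corollary follows by splitting on the residue of $n$ modulo $6$ and taking the value from Theorem~\ref{prevthm2} or Theorem~\ref{mainthm1} as appropriate. There are two gaps to fill: Theorem~\ref{prevthm2} assumes $n \ge 2$, so $n=1$ must be handled separately, and Theorem~\ref{prevthm2} leaves open the cases $n = 6p+4$ with $p \ge 2$, which Theorem~\ref{mainthm1} covers.

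The steps are as follows.

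\textbf{Case $n=1$.} Here $K_1 \square P_2 = P_2$ is planar and has an edge, so $\theta = 1$. The formula gives $\lfloor 9/6 \rfloor = 1$, so it agrees.

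\textbf{Case $n \ge 2$ and $n \neq 6p+4$ for every $p \ge 2$.} Theorem~\ref{prevthm2} gives the formula directly, together with the exception $\theta(K_9 \square P_2)=3$.

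\textbf{Case $n = 6p+4$.} Theorem~\ref{mainthm1} gives $\theta(K_{6p+4}\square P_2) = p+2$. The formula gives
\[
\left\lfloor \frac{6p+12}{6} \right\rfloor = p+2,
\]
so the two values coincide. This covers every $p \ge 0$. For $p=0,1$ (that is, $n=4,10$) it is also consistent with Theorem~\ref{prevthm2}, which already gives the formula for those $n$ since they are not exceptions there.

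The one point needing care is that $n=10$ must not pick up an exception analogous to $\theta(K_{10})=3$. Theorem~\ref{mainthm1} with $p=1$ gives $\theta(K_{10}\square P_2)=3=\lfloor 18/6\rfloor$, so the formula holds there. Hence the only exception is $n=9$, where $\lfloor 17/6\rfloor = 2$ but the true value is $3$. Beyond this, everything is a direct citation, so I do not expect any real obstacle.
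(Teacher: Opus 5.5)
Your proposal is correct and matches the paper's proof: $n=1$ by inspection, $n=6p+4$ (all $p\ge 0$) via Theorem~\ref{mainthm1} with $\lfloor (6p+12)/6\rfloor = p+2$, and every remaining $n$ via Theorem~\ref{prevthm2}. Your extra consistency checks for $n=4$ and $n=10$ are harmless but not needed.
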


Also, combined Theorems~\ref{mainthm1}, \ref{mainthm2}, and \ref{prevthm3}, we have the following corollary, which corresponds to the true version of Theorem~B.

\begin{cor}
\label{cor2}
Let $n \ge 1$ and $m \ge 3$ be integers. The thickness of the Cartesian product $K_n \square P_m$ is
\[
\theta(K_n \square P_m) = \left\lfloor \frac{n+9}{6} \right\rfloor,
\]
except that $\theta(K_3 \square P_m) = 1$ and possibly when $n=6p+3~(p \ge 2)$.
\end{cor}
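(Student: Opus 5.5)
The proof is a case check: Theorem~\ref{prevthm3} already settles every $n \ge 2$ except $n=3$ (which stays an exception in the statement), the possible exceptions $n=6p+3$, $n=6p+4$ with $p \ge 2$, and $n=8$. So it remains to treat $n=1$, $n=8$, and $n=6p+4$ with $p\ge 2$. Throughout I use that thickness is monotone under taking subgraphs: a planar decomposition of $G$ restricts to one of any subgraph $H\subseteq G$.

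\textbf{The case $n=1$.} Here $K_1\square P_m\cong P_m$ is planar and has edges, so its thickness is $1$. This equals $\lfloor (1+9)/6\rfloor = 1$.

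\textbf{The case $n=8$.} Theorem~\ref{mainthm2} gives $\theta(K_8\square P_m)=2$. This equals $\lfloor 17/6\rfloor = 2$.

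\textbf{The case $n=6p+4$, $p\ge 2$.} The target value is $\lfloor (6p+13)/6\rfloor = p+2$.
\begin{itemize}
\item \emph{Lower bound.} For $m\ge 3$, $K_{6p+4}\square P_m$ contains $K_{6p+4}\square P_2$ as a subgraph. Theorem~\ref{mainthm1} gives the latter thickness $p+2$, so $\theta(K_{6p+4}\square P_m)\ge p+2$.
\item \emph{Upper bound.} $K_{6p+4}\square P_m$ is a subgraph of $K_{6p+5}\square P_m$. Since $6p+5$ is not of the form $6q+3$ or $6q+4$ and is neither $3$ nor $8$, Theorem~\ref{prevthm3} applies and gives $\theta(K_{6p+5}\square P_m)=\lfloor (6p+14)/6\rfloor = p+2$.
\end{itemize}
Hence $\theta(K_{6p+4}\square P_m)=p+2$.

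The cases $n=6p+4$ with $p=0,1$, namely $n=4$ and $n=10$, are not among the exceptions of Theorem~\ref{prevthm3}, so they are already covered. Combining all cases yields the corollary, with only $n=3$ (thickness $1$) and possibly $n=6p+3$ $(p\ge 2)$ left as exceptions.

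No step here is a real obstacle once Theorems~\ref{mainthm1} and~\ref{mainthm2} are available. The only point needing care is the upper bound for $n=6p+4$. Rather than extending the construction for $P_2$ to longer paths, I embed into the $n=6p+5$ case. That case is not an exception in Theorem~\ref{prevthm3} and has the same value of the floor expression.
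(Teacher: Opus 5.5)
Your proof is correct and uses the same case analysis as the paper:
\begin{itemize}
\item $n=1$ directly;
\item $n=8$ from Theorem~\ref{mainthm2};
\item the lower bound for $n=6p+4$ from Theorem~\ref{mainthm1} via $K_{6p+4}\square P_2\subseteq K_{6p+4}\square P_m$;
\item everything else from Theorem~\ref{prevthm3}.
\end{itemize}

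The only genuine difference is the upper bound for $n=6p+4$. The paper uses inequality~(3) of \cite{YC}, $\theta(K_n\square P_m)\le\theta(K_{n+2})$, together with Theorem~\ref{Kn}, to get $\theta(K_{6p+4}\square P_m)\le\theta(K_{6p+6})=p+2$. You instead embed $K_{6p+4}\square P_m$ in $K_{6p+5}\square P_m$ and use the non-exceptional case $n\equiv 5 \pmod 6$ of Theorem~\ref{prevthm3}. Your version needs nothing beyond the theorems quoted in the paper and subgraph monotonicity. The paper's version is more direct and does not depend on a second case of Theorem~\ref{prevthm3}.

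Your embedding argument also works verbatim for $p=0,1$, since $5$ and $11$ are not exceptional in Theorem~\ref{prevthm3}. So, like the paper, you could treat all $p\ge 0$ uniformly. That would remove your reliance on reading the qualifier $(p\ge 2)$ in Theorem~\ref{prevthm3} as also applying to $n=6p+4$, although that reading is the natural one.
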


\section{Preliminary}
\label{Sec2}
We refer to the basic terminology in \cite{BM}.
In this paper all graphs are simple. Recall that $K_n$ and $P_m$ denote the complete graph on $n$ vertices and the path on $m$ vertices, respectively. For the thickness, we often use the fact that if $H$ is a subgraph of $G$, then $\theta(H) \le \theta(G)$. For a graph $G$ with thickness $k$, we call a family of planar graphs $\{ G_1, \ldots, G_k \}$ a \textit{planar decomposition} if $V(G_1) = \cdots = V(G_k) = V(G)$ and $E(G_1) \cup \cdots \cup E(G_k) = E(G)$ is an edge decomposition. When $k=2$, we also call a pair of planar graphs $G_1$ and $G_2$ a \textit{biplanar decomposition}.

We note some results on the Cartesian product $K_n \square P_m$. In 1977, Ringel \cite{Ri} considered the genus of $K_n \square P_2$ and determined it for about five-sixths of all values $n$. Recently, Sun \cite{Sun} completely determined it for all $n$.
Badgett, Millichap, and the author \cite{BMN} determined the genus of $K_4 \square P_3$, to characterize the toroidal Cartesian product where one factor is 3-connected.

\section{Proofs of the main theorems}
\label{Sec3}
Let $G$ be a plane graph. Let $F(G)$ be the face set, where a face is not necessarily a 2-cell region when $G$ is disconnected. If there exists a non-2-cell region $f$, that is, the number of boundary closed walks of $f$ is at least two, then the length of the boundary closed walks of $f$ is defined as the summation of all the closed walks. See Figure~\ref{fig:1} for an example. The disconnected plane graph $H_2^m$ has eight faces, and one of them, the outer face $f$, is a non-2-cell region. The face $f$ has three boundary closed walks $v_3^m, v_5^mv_6^mv_7^mv_5^m$, and $v_8^m$, and their lengths are 0, 3, 0, respectively. Hence, the length of the boundary closed walks of $f$ is $0+3+0=3$. We prepare the following lemma.

\begin{lem}
\label{lem1}
Let $G$ be a connected graph with thickness $k \ge 2$. There exists a planar decomposition $\{ G_1, \ldots, G_k \}$ of $G$ such that $G_i$ has at least two edges for $1 \le i \le k$.
\end{lem}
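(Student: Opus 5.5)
Fix a planar decomposition $\{G_1,\ldots,G_k\}$ of $G$ with $k=\theta(G)$. My plan is to show first that every $G_i$ automatically has at least one edge, and then to repair, one at a time, any $G_i$ that has exactly one edge by moving a single edge into it from another part. Throughout I use two facts. Deleting an edge from a planar graph leaves it planar. Adding an edge to a plane graph whose endpoints lie on a common face keeps it planar.

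\emph{Step 1: no part is empty.} If some $E(G_i)=\emptyset$, then the remaining $k-1$ graphs already form a planar decomposition of $G$. This contradicts the minimality of $k$, so every $G_i$ has at least one edge.

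\emph{Step 2: enlarge a part with a single edge.} Suppose $E(G_i)=\{e\}$ with $e=uv$. Since $k\ge 2$, $G$ is not planar, so $|E(G)|\ge 9$. Hence some $G_j$ with $j\ne i$ contains at least two edges; otherwise $|E(G)|\le k$, and then $G$ has at most $k$ edges. That alone gives no contradiction, so I argue differently. If every other part had exactly one edge, then merging $G_i$ with any other single-edge part would give a graph with two edges, which is planar. This would yield a planar decomposition with $k-1$ parts, a contradiction. So some $G_j$ has at least two edges.

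Now I want an edge $f\in E(G_j)$ such that $G_i+f$ is planar and $G_j-f$ still has at least two edges. The first condition is automatic: $G_i+f$ has only two edges, so it is planar. The second holds if $|E(G_j)|\ge 3$. If every $G_j$ with $j\ne i$ has at most two edges, then each part has at most two edges. I would then merge $G_i$ (one edge) with some $G_j$ (at most two edges) into a graph with at most three edges, which is planar. This contradicts minimality again. So some $G_j$ has at least three edges, and moving one of its edges to $G_i$ works. This move keeps every other part unchanged, leaves $G_j$ with at least two edges, and strictly decreases the number of single-edge parts.

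\emph{Step 3: iterate.} I repeat Step 2 until no part has fewer than two edges. The process terminates because each move reduces the number of single-edge parts by one. The result is a planar decomposition in which every $G_i$ has at least two edges.

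The only delicate point is in Step 2: ensuring a donor part with at least three edges exists. Connectivity of $G$ is not really needed there; minimality of $k$, via the observation that any graph with at most three edges is planar, does all the work.
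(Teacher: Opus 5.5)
Your proof is correct and is essentially the paper's argument: minimality of $k$ forces the union of the single-edge part with any other part to be non-planar, so a donor part with at least three edges exists, and moving one of its edges gives the desired decomposition. Your explicit exclusion of empty parts and your iteration are harmless additions; in fact, minimality already shows that at most one part can have exactly one edge, so a single move suffices.
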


\begin{proof}
Suppose that $G_1$ has exactly one edge. In this case, each $G_i$ with $i \ne 1$ has at least three edges since $E(G_1) \cup E(G_i)$ should induce a non-planar graph. Therefore, removing an edge from $G_2$ to $G_1$ results in a desired $k$-planar decomposition.
\end{proof}

\medskip
\begin{proof}[Proof of Theorem \ref{mainthm1}]
By the inequality~(2) in \cite{YC} and Theorem~\ref{Kn}, we have $\theta(K_{6p+4} \square P_2) \le \theta(K_{6p+5}) = p+2$. We now show that $\theta(K_{6p+4} \square P_2) \ge p+2$. Let $\theta:= \theta(K_n \square P_2)$ and $\{ G_1, \ldots, G_{\theta} \}$ be a planar decomposition of $K_n \square P_2$ such that $G_i$ has at least two edges for $1 \le i \le \theta$, which can be assumed by Lemma~\ref{lem1}. Considering some fixed planar embedding of $G_i$, let $V_i, E_i, F_i$ be the vertex set, edge set, and face set of $G_i$, respectively. Let $F = \bigcup_{i=1}^{\theta} F_i$. By the same argument in \cite[Section~2.1]{Sun}, 
we have $|F| \le \frac{2n^2-n}{3}$; we here write its proof for the completeness. Notice that for every face $f$ in $F$, the length of the boundary closed walk(s) of $f$ is at least 3 since each $G_i$ has at least two edges. Notice also that every edge $e\in E(K_n \square P_2)$ is contained in either (a)~exactly two closed walks in $F$ each of which contains $e$ precisely once, or (b)~exactly one closed walk in $F$ that contains $e$ twice. Let $F_P \subseteq F$ denote the set of closed walks each of which contains at least one edge corresponding to $P_2$ (that is, an edge joining different $K_n$'s). Since every closed walk passes through an even number of such edges, $|F_P| \le n$. Moreover, the length of closed walk(s) of any face in $F_P$ is at least~4. Since $|E(K_n \square P_2)| = n^2$, the number of sides remaining in $F \setminus F_P$ is at most $2n^2-4n$. Hence, there are at most $\frac{2n^2-4n}{3}$ faces in $F \setminus F_P$. Thus, $|F| = |F_P| + |F \setminus F_P| \le n+\frac{2n^2-4n}{3} \le \frac{2n^2-n}{3}$.

By Euler's polyhedral formula, for $1\le i \le \theta$, $|V_i|-|E_i|+|F_i| \ge 2$ holds. Then, we have 
\begin{align*}
2\theta &\le \sum_{i=1}^{\theta}\left( |V_i|-|E_i|+|F_i| \right)\\
&= 2n\theta -|E(K_n \square P_2)|+|F|\\
&\le 2n\theta-n^2+\frac{2n^2-n}{3}.
\end{align*}
Therefore, we have
\begin{align*}
2(1-n)\theta &\le -\frac{n(n+1)}{3}\\
\theta &\ge \frac{n(n+1)}{6(n-1)} > \frac{n+2}{6}.
\end{align*}
Substituting $n=6p+4$, we have $\theta > p+1$, as desired.
\end{proof}

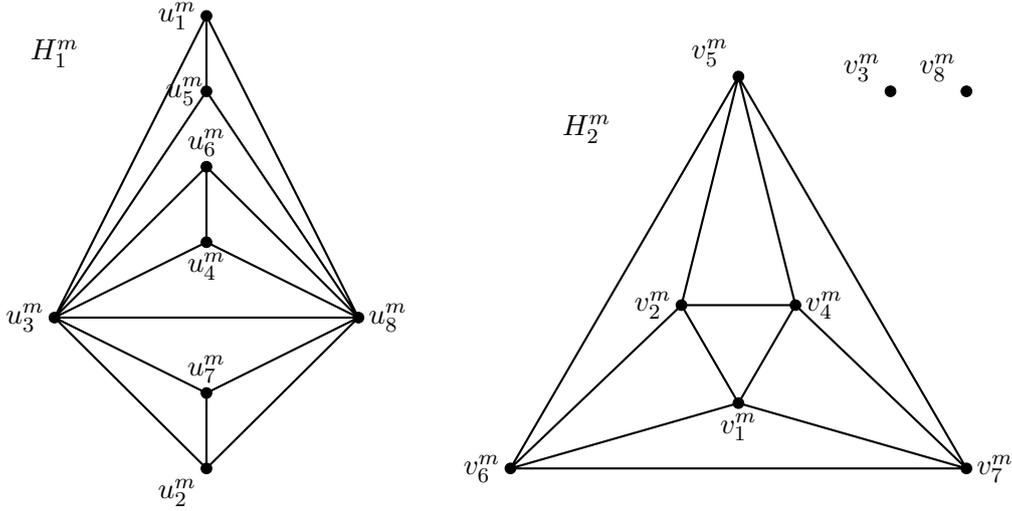
\begin{figure}[t]
 \centering
\begin{tikzpicture}
\draw (-2,3.5) node{$H_1^m$};
\draw[fill=black] (0,4) circle (2pt) node[left]{$u_1^m$};
\draw[fill=black] (0,3) circle (2pt);
\draw[fill=black] (0.1,3) node[left]{$u_5^m$};
\draw[fill=black] (0,2) circle (2pt) node[above]{$u_6^m$};
\draw[fill=black] (0,1) circle (2pt) node[below]{$u_4^m$};
\draw[fill=black] (0,-1) circle (2pt) node[above]{$u_7^m$};
\draw[fill=black] (0,-2) circle (2pt) node[below left]{$u_2^m$};
\draw[fill=black] (-2,0) circle (2pt) node[left]{$u_3^m$};
\draw[fill=black] (2,0) circle (2pt) node[right]{$u_8^m$};
\draw[thick] (2,0)--(0,4)--(-2,0)--(0,3)--(2,0)--(0,2)--(-2,0)--(0,1)--(2,0)--(-2,0)--(0,-1)--(2,0)--(0,-2)--(-2,0);
\draw[thick] (0,4)--(0,3);
\draw[thick] (0,2)--(0,1);
\draw[thick] (0,-1)--(0,-2);

\def\x{7}
\def\y{-2}
\draw (\x-2,2.5) node{$H_2^m$};
\draw[fill=black] (\x+2,\y+5) circle (2pt) node[above left]{$v_3^m$};
\draw[fill=black] (\x+3,\y+5) circle (2pt) node[above left]{$v_8^m$};
\draw[fill=black] (\x+0,\y+5.196) circle (2pt) node[above left]{$v_5^m$};
\draw[fill=black] (\x-0.75,\y+2.165) circle (2pt) node[left]{$v_2^m$};
\draw[fill=black] (\x+0.75,\y+2.165) circle (2pt) node[right]{$v_4^m$};
\draw[fill=black] (\x+0,\y+0.866) circle (2pt) node[below]{$v_1^m$};
\draw[fill=black] (\x-3,\y) circle (2pt) node[left]{$v_6^m$};
\draw[fill=black] (\x+3,\y) circle (2pt) node[right]{$v_7^m$};
\draw[thick] (\x+0,\y+5.196)--(\x-3,\y)--(\x+3,\y)--(\x+0,\y+5.196)--(\x-0.75,\y+2.165)--(\x-3,\y)--(\x+0,\y+0.866)--(\x-0.75,\y+2.165)--(\x+0.75,\y+2.165)--(\x+0,\y+0.866)--(\x+3,\y)--(\x+0.75,\y+2.165)--(\x+0,\y+5.196);
\end{tikzpicture}
 \caption{Planar graphs $H_1^m$ and $H_2^m$.}
 \label{fig:1}
\end{figure}

\medskip
\begin{proof}[Proof of Theorem \ref{mainthm2}]
By Theorem~\ref{Kn} and the fact $K_8 \square P_m \supseteq K_8$, we have $\theta(K_8 \square P_m) \ge \theta(K_8) = 2$. We now show that $\theta(K_8 \square P_m) \le 2$ for all $m\ge 1$, by constructing a biplanar decomposition $\{ G_1^m, G_2^m \}$ of $K_8 \square P_m$ recursively. Note that for $m \in \{2, 3\}$, Chen, Kohonen, and Yang \cite[Figures~1 and 2]{CKY} has already constructed them. 
For $m=1$, let $G_1^1 = H_1^m$ and $G_2^1 = H_2^m$ where $H_1^m$ and $H_2^m$ are depicted in Figure~\ref{fig:1}. (Assuming $u_i^m = v_i^m$ for $1 \le i \le 8$, we see that the pair $G_1^1$ and $G_2^1$ is a biplanar decomposition of $K_8 \simeq K_8 \square P_1$.) Let $m \ge 2$ and assume that we have a biplanar decomposition $\{ G_1^{m-1}, G_2^{m-1} \}$ of $K_8 \square P_{m-1}$ that is constructed along this proof. Now, for convenience, let $V(G_1^{m-1}) = \bigcup_{j=1}^{m-1} \{u_1^j, \ldots, u_8^j\}$ and $V(G_2^{m-1}) = \bigcup_{j=1}^{m-1} \{v_1^j, \ldots, v_8^j\}$ so that identifying $u_i^j$ and $v_i^j$ for $1 \le i \le 8$ and $1 \le j \le m-1$ results in the graph $K_8 \square P_{m-1}$.

If $m$ is even, then prepare the graphs $I_1^m$ and $I_2^m$ depicted in Figure~\ref{fig:2}.
(a-1) Insert $G_1^{m-1}$ into the triangular face $u_1^mu_2^mu_3^m$ in $I_1^m \setminus \{ u_4^m\}$ and add the three edges $u_1^{m-1}u_1^m, u_2^{m-1}u_2^m, u_3^{m-1}u_3^m$, and
(a-2) insert the vertex $u_4^m$ into $G_1^{m-1}$ and add the edge $u_4^{m-1}u_4^m$; let the resulting graph be $G_1^m$.
(b-1) Insert $G_2^{m-1} \setminus \{ v_8^{m-1}\}$ into the quadrilateral face $v_5^mv_4^mv_6^mv_7^m$ in $I_2^m$ and add the three edges $v_5^{m-1}v_5^m, v_6^{m-1}v_6^m, v_7^{m-1}v_7^m$, and
(b-2) insert the vertex $v_8^{m-1}$ into $I_2^m$ and add the edge $v_8^{m-1}v_8^m$; let the resulting graph be $G_2^m$.

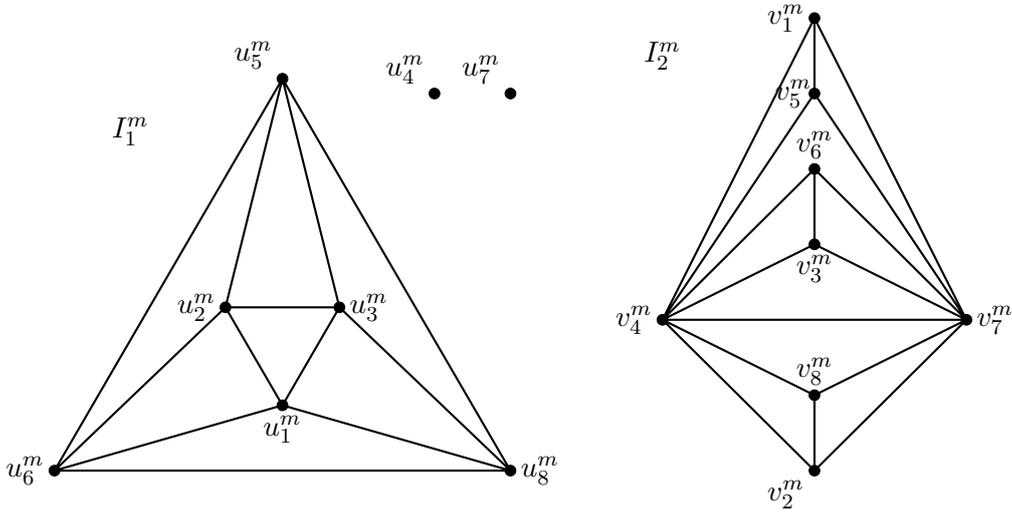
\begin{figure}[t]
 \centering
\begin{tikzpicture}
\def\x{-7}
\def\y{-2}
\draw (\x-2,2.5) node{$I_1^m$};
\draw[fill=black] (\x+2,\y+5) circle (2pt) node[above left]{$u_4^m$};
\draw[fill=black] (\x+3,\y+5) circle (2pt) node[above left]{$u_7^m$};
\draw[fill=black] (\x+0,\y+5.196) circle (2pt) node[above left]{$u_5^m$};
\draw[fill=black] (\x-0.75,\y+2.165) circle (2pt) node[left]{$u_2^m$};
\draw[fill=black] (\x+0.75,\y+2.165) circle (2pt) node[right]{$u_3^m$};
\draw[fill=black] (\x+0,\y+0.866) circle (2pt) node[below]{$u_1^m$};
\draw[fill=black] (\x-3,\y) circle (2pt) node[left]{$u_6^m$};
\draw[fill=black] (\x+3,\y) circle (2pt) node[right]{$u_8^m$};
\draw[thick] (\x+0,\y+5.196)--(\x-3,\y)--(\x+3,\y)--(\x+0,\y+5.196)--(\x-0.75,\y+2.165)--(\x-3,\y)--(\x+0,\y+0.866)--(\x-0.75,\y+2.165)--(\x+0.75,\y+2.165)--(\x+0,\y+0.866)--(\x+3,\y)--(\x+0.75,\y+2.165)--(\x+0,\y+5.196);

\draw (-2,3.5) node{$I_2^m$};
\draw[fill=black] (0,4) circle (2pt) node[left]{$v_1^m$};
\draw[fill=black] (0,3) circle (2pt);
\draw[fill=black] (0.1,3) node[left]{$v_5^m$};
\draw[fill=black] (0,2) circle (2pt) node[above]{$v_6^m$};
\draw[fill=black] (0,1) circle (2pt) node[below]{$v_3^m$};
\draw[fill=black] (0,-1) circle (2pt) node[above]{$v_8^m$};
\draw[fill=black] (0,-2) circle (2pt) node[below left]{$v_2^m$};
\draw[fill=black] (-2,0) circle (2pt) node[left]{$v_4^m$};
\draw[fill=black] (2,0) circle (2pt) node[right]{$v_7^m$};
\draw[thick] (2,0)--(0,4)--(-2,0)--(0,3)--(2,0)--(0,2)--(-2,0)--(0,1)--(2,0)--(-2,0)--(0,-1)--(2,0)--(0,-2)--(-2,0);
\draw[thick] (0,4)--(0,3);
\draw[thick] (0,2)--(0,1);
\draw[thick] (0,-1)--(0,-2);
\end{tikzpicture}
 \caption{Planar graphs $I_1^m$ and $I_2^m$.}
 \label{fig:2}
\end{figure}

If $m$ is odd, then prepare the graphs $H_1^m$ and $H_2^m$ depicted in Figure~\ref{fig:1}.
(a-1)~insert $G_1^{m-1} \setminus \{ u_7^{m-1}\}$ into the quadrilateral face $u_5^mu_3^mu_6^mu_8^m$ in $H_1^m$ and add the three edges $u_5^{m-1}u_5^m, u_6^{m-1}u_6^m$, $u_8^{m-1}u_8^m$, and
(a-2)~insert the vertex $u_7^{m-1}$ into $H_1^m$ and add the edge $u_7^{m-1}u_7^m$; let the resulting graph be $G_1^m$.
(b-1)~Insert $G_2^{m-1}$ into the triangular face $v_1^mv_2^mv_4^m$ in $H_2^m \setminus \{ v_3^m\}$ and add the three edges $v_1^{m-1}v_1^m, v_2^{m-1}v_2^m, v_4^{m-1}v_4^m$, and
(b-2)~insert the vertex $v_3^m$ into $G_2^{m-1}$ and add the edge $v_3^{m-1}v_3^m$; let the resulting graph be $G_2^m$.

In both cases, it is straightforward to check that the pair $G_1^m$ and $G_2^m$ is a biplanar decomposition of $K_8 \square P_m$. Notice that, for every even $m$, the outer face of $G_1^m$ is bounded by $u_5^mu_6^mu_8^m$ (and the isolated vertex $u_7^m$) and the outer face of $G_2^m$ is bounded by $v_1^mv_4^mv_2^mv_7^m$; for every odd $m$, the outer face of $G_1^m$ is bounded by $u_1^mu_3^mu_2^mu_8^m$ and the outer face of $G_2^m$ is bounded by $v_5^mv_6^mv_7^m$ (and the isolated vertex $v_8^m$) so that the proof works.
\end{proof}

\medskip
\begin{proof}[Proof of Corollary \ref{cor1}]
The case $n=1$ obviously follows. The case $n=6p+4$ follows from Theorem~\ref{mainthm1} since $\theta(K_{6p+4} \square P_2) = p+2 = \left\lfloor \frac{(6p+4)+8}{6} \right\rfloor$. The other cases follow from Theorem~\ref{prevthm2}.
\end{proof}

\medskip
\begin{proof}[Proof of Corollary \ref{cor2}]
Let $m \ge 3$ be an integer. The case $n=1$ obviously follows. The case $n=8$ follows from Theorem~\ref{mainthm2} since $\theta(K_8 \square P_m) = 2 = \left\lfloor \frac{8+9}{6} \right\rfloor$. For $n=6p+4~(p \ge 0)$, it has already shown by the inequality~(3) in \cite{YC} and Theorem~\ref{Kn} that $\theta(K_{6p+4} \square P_m) \le \theta(K_{(6p+4)+2}) = p+2 = \left\lfloor \frac{(6p+4)+9}{6} \right\rfloor$. Additionally, by Theorem~\ref{mainthm1}, $\theta(K_{6p+4} \square P_m) \ge \theta(K_{6p+4} \square P_2) = p+2$. Hence, the case $n=6p+4$ follows for all $p \ge 0$. The other cases follow from Theorem~\ref{prevthm3}.
\end{proof}

\section{Concluding remarks}
On Corollary \ref{cor2}, for the remaining case $n=6p+3$ deciding whether $\theta(K_{6p+3} \square P_m) = p+2$ or $p+1$ seems to be difficult by the following reason. As proven in \cite{YC}, the truth of $\theta(K_{6p+5}-e) = p+1$, which is conjectured by Hobbs \cite{Ho} for $p \ge 3$, would imply that $\theta(K_{6p+3} \square P_m) = p+1 < \left\lfloor \frac{(6p+3)+9}{6} \right\rfloor$. However, proving $\theta(K_{6p+5}-e) = p+1$ is very challenging since it is much harder than proving $\theta(K_{6p+4}) = p+1$, whose proof is already much involved (see~\cite{AG}). Unlike the case $n=8$, we feel that it is hard to prove $\theta(K_{6p+3} \square P_m) \le p+1$ without using the assumption of $\theta(K_{6p+5}-e) = p+1$.

\bigskip
\section*{Funding}
This research was partially supported by JSPS KAKENHI Grant Number JP21K13831. 

\medskip
\section*{Availability of data and materials}
Not applicable.

\medskip
\section*{Competing Interests}
The author has no relevant financial or non-financial interests to disclose.

\begin{thebibliography}{99}
\bibitem{AG}
V. B. Alekseev and V. S. Gon\v{c}hakov,
\textit{The thickness of an arbitrary complete graph,}
Math. Sbornik. 30 (1976), 187--202.

\bibitem{BMN}
E. Badgett, C. Millichap, and K. Noguchi,
\textit{Toroidal Cartesian products where one factor is $3$-connected},
to appear in Graphs and Combinatorics.

\bibitem{BH}
L. W. Beineke and F. Harary,
\textit{The thickness of the complete graph},
Canadian J. Math. 17 (1965), 850--859.

\bibitem{BHM}
L. W. Beineke, F. Harary, and J. W. Moon,
\textit{On the thickness of the complete bipartite graph},
Math. Proc. Cambridge Philos. Soc. 60 (1964), 1--5.

\bibitem{BM}
J. A. Bondy, U. S. R. Murty, 
\textit{Graph Theory, Graduate Texts in Mathematics 244, Springer}, 
New York, 2008.

\bibitem{CKY}
Y. Chen, J. Kohonen, and Y. Yang,
\textit{Erratum to ``The thickness of amalgamations and Cartesian product of graphs''},
Discuss. Math. Graph Theory 46 (2026), 91--95.

\bibitem{CY}
Y. Chen and X. Yin,
\textit{The thickness of the Cartesian product of two graphs},
Canad. Math. Bull. 59 (2016), 705--720.

\bibitem{GY}
X. Guo and Y. Yang,
\textit{The thickness of some Cartesian product graphs},
Ars Combin. 147 (2019), 97--107.

\bibitem{Ho}
A. M. Hobbs,
\textit{A survey of thickness},
in: Recent Progress in Combinatorics (Proc. 3rd Waterloo Conf. on Combinatorics, 1968), W. T. Tutte (Ed(s)), (New York, Academic Press, 1969) 255--264.

%
\bibitem{Ri}
G. Ringel,
\textit{On the genus of the graph $K_n \times K_2$ or the $n$-prism},
Discrete Math. 20 (1977), 287--294.

\bibitem{Sun}
T. Sun,
\textit{Settling the genus of the $n$-prism},
European J. Combin. 110 (2023), Paper No. 103667, 12 pp.

\bibitem{YC}
Y. Yang and Y. Chen,
\textit{The thickness of amalgamations and Cartesian product of graphs},
Discuss. Math. Graph Theory 37 (2017), 561--572.
\end{thebibliography}
\end{document}